\DeclareMathAlphabet{\mathbbold}{U}{bbold}{m}{n}	
\theoremstyle{plain}
\newtheorem{theorem}{Theorem}
\newtheorem*{theorem*}{Theorem}
\newtheorem{proposition}[theorem]{Proposition}
\newtheorem{cor}[theorem]{Corollary}
\newtheorem{lemma}[theorem]{Lemma}
\newtheorem*{lemma*}{Lemma}
\newtheorem*{prob}{Open problem}
\theoremstyle{definition}
\newtheorem{definition}[theorem]{Definition}
\theoremstyle{remark}
\newtheorem{remark}[theorem]{Remark}
\newtheorem{rmk}[theorem]{Remark}
\newcommand{\N}{\mathbb N}
\newcommand{\R}{\mathbb R}
\newcommand{\distr}{\mathcal{D}}
\DeclareMathOperator{\Span}{span}
\title{Branching geodesics in sub-Riemannian geometry}
\author[Thomas Mietton]{Thomas Mietton$^\dagger$}
\author[Luca Rizzi]{Luca Rizzi$^\dagger$}
\address{$^\dagger$Univ. Grenoble Alpes, CNRS, Institut Fourier, 38000 Grenoble, France}
\date{\today}
\subjclass[2010]{53C17, 49J15}
\begin{document}

\begin{abstract}
In this note, we show that sub-Riemannian manifolds can contain branching normal minimizing geodesics. This phenomenon occurs if and only if a normal geodesic has a discontinuity in its rank at a non-zero time, which in particular for a strictly normal geodesic means that it contains a non-trivial abnormal subsegment. The simplest example is obtained by gluing the three-dimensional Martinet flat structure with the Heisenberg group in a suitable way. 
We then use this example to construct more general types of branching.
\end{abstract}
	
\maketitle

\setcounter{tocdepth}{1}
\tableofcontents

\section{Introduction}

A metric space is branching if there exist minimal geodesics starting from the same point which follow the same path for some initial time interval, and then become disjoint. Common examples are found in Finsler geometry (e.g.\ $\R^2$ with the sup norm), or on graphs. On the other hand, it is well-known that Riemannian manifolds and Alexandrov spaces with curvature bounded from below are non-branching.

We are interested here in sub-Rieman\-nian spaces, a large class of metric structures generalizing Riemannian geometry where a metric is defined only on a subset of tangent directions (cf.\ Section \ref{s:sr} for precise definitions). Several questions concerning geodesics, which are trivial in Riemannian geometry, become hard open problems in the sub-Riemannian setting. For example it has been only recently proven in \cite{corner} that sub-Riemannian geodesics cannot have corners, but it is not yet known whether geodesics are $C^1$, see for example \cite{R-Bourbaki}.

To provide further motivation for our contribution, let us mention that there is an on-going effort in trying to define a suitable concept of lower curvature bound for sub-Riemannian spaces, in particular in relation with the synthetic approach à la Lott-Sturm-Villani (cf.\ for example \cites{BR-interpolation,BKS1,BKS2,BG-JEMS,Milman-QCD} and \cite[p.\ 1127–43]{V-Bourbaki}). Since the existence of branching geodesics causes difficulties in the study of optimal transport and of spaces satisfying synthetic curvature bounds, it is important for further progress in the theory to understand whether sub-Riemannian structures can exhibit such a phenomenon.

In this paper we show that sub-Riemannian (normal) geodesics can branch, adding this phenomenon to the list of remarkable features of sub-Riemannian geometry. Even though normal geodesics are obtained through the action of a Hamiltonian flow, and are therefore smooth, they are not uniquely characterized by their jet at some point. To our best knowledge, it is the first time that this fact is observed.

\subsection{Branching and magnetic fields}\label{s:magnetic}

We describe succinctly the branching phenomenon through an example, of the same basic type as Montgomery's original construction of abnormal minimizers. We will exploit the connection between the equations of motion of a particle in a magnetic field and sub-Riemannian geodesics, pointed out in \cite{Mont-iso,Mont-abnormal}. 

Take the distribution on $\R^3$ defined by the kernel of a one-form $\omega = dz-A(x,y)dy$, and consider the sub-Riemannian metric given by the restriction of $dx^2+dy^2$. Let $B = \partial_x A$ be the magnetic field associated with $\omega$, that is $\omega \wedge d\omega = -B(x,y)dx\wedge dy\wedge dz$. Notice that, without changing $B$, we can alter $A$ in such a way that $A(0,y)=0$ (gauge freedom), and thus the straight line $\gamma_0(t) = (0,t,0)$ for $t\in \R$ is horizontal.

It is well-known that abnormal paths are precisely the horizontal curves contained in the zero-locus of $B$, while normal geodesics are those whose projection $(x(t),y(t))$ on the $xy$-plane satisfies
\begin{equation}\label{eq:magnetic}
\kappa(t) = \lambda B(x(t),y(t)),
\end{equation}
for some $\lambda \in \R$, and where $\kappa(t)$ is the curvature of $(x(t),y(t))$, which we assume to be parametrized with constant speed. In particular, the ODE corresponding to \eqref{eq:magnetic} describes the motion of a particle with charge $\lambda$ under the action of the magnetic field $B(x,y)$ normal to the plane. 

Choose a smooth potential $A(x,y)$ such that $B(x,y)=x$ for $y<0$ and $B(0,y)>0$ when $y>0$.  The zero-locus of $B$ coincides in this case with $x=0$ when $y<0$. In particular, $\gamma_0(t) = (0,t,0)$, for $t<0$, is an abnormal geodesic. Since $B(0,y)=0$ for $y<0$, the curve $\gamma_0(t)$ for $t<0$ satisfies also \eqref{eq:magnetic} for any $\lambda \in \R$, so it is also normal. We can now extend such a curve to a normal geodesic $\gamma_\lambda(t)$ for $t \in \R$, by solving \eqref{eq:magnetic} for different values of $\lambda \in \R$. Of course, $\gamma_0$ corresponds to the straight line but, from the fact that $B(0,y)>0$ for $y>0$, the curve $\gamma_\lambda$ must have non-vanishing curvature for small non-zero $\lambda$, hence a branching phenomenon occurs at the origin.
Moreover, from what we will show in Proposition \ref{p:spray}, the projection on the $xy$-plane of the trajectories $\gamma_{\lambda}$ contains an open neighborhood of the positive $y$-axis and those trajectories are all distinct for $\lambda$ sufficiently small. From the physical viewpoint, this phenomenon corresponds to particles having different charges, which ``spray out'' following different trajectories under the influence of the magnetic field.

In Section \ref{s:example1}, we show an explicit construction of such $A(x,y)$, obtained by gluing a flat Martinet structure with the standard Heisenberg one.

\subsection{Strictly abnormal branching}
In this note we only consider the branching of normal geodesics, and we do not cover the possible branching of strictly abnormal ones (cf.\ Section \ref{s:sr}). It is easy to produce sub-Riemannian structures with branching abnormal paths. For example, consider a degenerate Martinet-type structure in a three-dimensional space, whose Martinet surface itself branches. Such a structure cannot verify the usual non-degeneracy condition, cf.\ \cite[Sec.\ 3.2]{montgomery02}. The Liu-Sussmann local minimality result for abnormal paths does not apply \cite{LS-memoir}, and we are not able to prove that these paths are geodesic (i.e.\ length-minimizing curves).
\begin{prob}
Find an example of branching strictly abnormal geodesics.
\end{prob}

\subsection*{Structure of the paper}
To make the paper self-contained, in Section \ref{s:sr} we recall some basic facts in sub-Riemannian geometry, following \cite{abb19}. Sub-Riemannian branching is then discussed in Section \ref{s:branch}, and put in relation with the well-known normal/abnormal duality of sub-Riemannian geodesics. The explicit construction of an example, as described in the abstract, is done in Section \ref{s:example1}. We conclude by building the most general case of multiply-branching normal geodesics, in Section \ref{s:multiple}.

\section{Sub-Riemannian geometry}\label{s:sr}

A sub-Rieman\-nian structure on a smooth $n$-dimensional manifold $M$, where $n\geq 2$, is defined by a set of $m$ global smooth vector fields $X_{1},\ldots,X_{m}$, called a \emph{generating frame}. The \emph{distribution} is the possibly rank-varying family of subspaces of the tangent spaces spanned by the vector fields at each point
\begin{equation}
\distr_{x}=\Span\{X_{1}(x),\ldots,X_{m}(x)\}\subseteq T_{x}M,\qquad \forall\, x\in M.
\end{equation}
The generating frame induces an inner product $g_{x}$ on $\distr_{x}$  such that
\begin{equation}
g_{x}(v,v):=\min\left\{\sum_{i=1}^{m}u_{i}^{2}\mid \sum_{i=1}^{m}u_{i}X_{i}(x)=v\right\}, \qquad \forall v\in\distr_x.
\end{equation}
We assume that the structure is \emph{bracket-generating}, i.e., the tangent space $T_{x}M$ is spanned by the vector fields $X_{1},\ldots,X_{m}$ and their iterated Lie brackets at $x$.

A \emph{horizontal curve} $\gamma : [0,1] \to M$ is an absolutely continuous path such that there exists a \emph{control} $u\in L^{2}([0,1],\R^{m})$ satisfying
\begin{equation}\label{eq:horizontal}
\dot\gamma(t) =  \sum_{i=1}^m u_i(t) X_i(\gamma(t)), \qquad \mathrm{a.e.}\, t \in [0,1].
\end{equation}
This implies that $\dot\gamma(t) \in \distr_{\gamma(t)}$ for almost every $t$. Notice that the control $u$ in \eqref{eq:horizontal} is not unique, but one can always find a unique \emph{minimal control}, i.e.\ the one such that $g_x(\dot\gamma(t),\dot\gamma(t)) = |u(t)|^2$ for a.e. $t\in [0,1]$. We define the \emph{length} of $\gamma$ as
\begin{equation}
\ell(\gamma) = \int_0^1 \sqrt{g(\dot\gamma(t),\dot\gamma(t))}dt.
\end{equation}
The \emph{sub-Rieman\-nian} (or  \emph{Carnot-Carathéodory})  \emph{distance} is defined by:
\begin{equation}
d_{SR}(x,y) = \inf\{\ell(\gamma)\mid \gamma(0) = x,\, \gamma(1) = y,\, \gamma \text{ horizontal} \}.
\end{equation}




By the Chow-Rashevskii theorem, under the bracket-generating assumption, between any two points $x,y\in M$, there exists a horizontal path, and therefore the sub-Riemannian distance is well-defined. Furthermore, one can prove that it is continuous and its induced topology is the same as the manifold topology. We remark that this definition of sub-Riemannian metric, based on the concept of global generating frame, includes the classical constant-rank case, see \cite[Section 3.1.4]{abb19}.

In place of the length, it is convenient to consider the \emph{energy} 
\begin{equation}
J(\gamma) = \frac{1}{2}\int_0^1 g(\dot\gamma(t),\dot\gamma(t)) dt.
\end{equation}
Horizontal trajectories minimizing the energy with fixed endpoints are exactly paths that minimize the length, parametrized with constant speed. We call a \emph{minimizing geodesic} between two points $x$ and $y$ in $M$ a horizontal path $\gamma :[0,1]\to M$, with $\gamma(0)=x$ and $\gamma(1)=y$, that minimizes the energy among all horizontal paths sharing the same extremities. The term \emph{geodesic}, instead, denotes the more general class of horizontal paths that are minimizing geodesics locally around each of its points.


\subsection{Characterization of geodesics}

Let $x\in M$. By Cauchy-Lipschitz, there exists an open set $\mathcal{U} \subseteq L^2([0,1],\mathbb{R}^m)$ such that, for all $u \in \mathcal{U}$, the Cauchy problem:
\begin{equation}
\dot{\gamma}_u(t) = \sum_{i=1}^m u_i(t) X_i(\gamma_u(t)), \qquad \gamma_u(0)=x
\end{equation}
admits a solution $\gamma_u$ defined on $[0,1]$. The \emph{end-point map} $E_x : \mathcal{U} \mapsto M$ is then $E_x(u) = \gamma_u(1)$. The end-point map is weakly continuous and differentiable. The problem of finding a minimizing geodesic between two points $x$ and $y$ is then the problem of minimizing the functional $J$ (seen as a smooth functional defined on $\mathcal{U}$) under the constraint $E_x(u)=y$. By the Lagrange's multipliers rule, if $\gamma$ is a minimizing geodesic between $x$ and $y$, and $u$ is its minimal control, then there exists $\lambda_1 \in T_y^*M$ and $\nu \in \{0,1\}$, with $(\lambda_1,\nu) \neq 0$, such that 
\begin{equation}\label{multLagranage}
	\lambda_1 \circ D_u E_x = \nu \langle u,\cdot \rangle,
\end{equation}
where $\circ$ denotes the composition of linear maps and $D$ the (Fréchet) differential. Any path whose minimal control verifies \eqref{multLagranage} with $\nu=0$ is called \emph{abnormal}, or \emph{singular}. A path verifying \eqref{multLagranage} for its minimal control with $\nu=1$ is called \emph{normal}. Notice that the case $\nu=0$ means that $u$ is a critical point of the end-point map. 

The covector $\lambda_1$ can be interpolated for times $t \in [0,1]$ yielding a lift of the curve $\gamma$ in the cotangent bundle. In particular in the normal case, the lift $\lambda : [0,1]\to T^*M$ solves a Hamiltonian differential equation. To state this fact more precisely, let us first define some objects: if $X$ is a vector field on $M$, we can associate to it a function $h_{X} : T^{*}M \to \R$ defined by $h_{X}(\lambda)=\langle \lambda,X \rangle$. In turn, if $h$ is a function on the cotangent bundle, its associated vector field $\vec{h}$ is defined by $\sigma(\cdot,\vec{h})=dh$, where $\sigma$ is the canonical symplectic form on the cotangent bundle, which can be expressed in coordinates as $\sigma = \sum_{i=1}^{n}dp_i \wedge dq_i$. Finally, for a sub-Riemannian structure in $M$ given by a generating family as above, we define the Hamiltonian $H: T^*M \to \R$ by 
\begin{equation}
H(\lambda)=\frac{1}{2} \sum_{i=1}^m h_{X_i}(\lambda)^2 = \frac{1}{2} \sum_{i=1}^m \langle \lambda,X_i \rangle ^2.
\end{equation}
The following result is an immediate consequence of the characterization of energy minimizers by the Lagrange multipliers rule, or can also be seen as a version of the Pontryagin maximum principle in this setting, cf.\ \cite{agrabook}.
\begin{theorem}	\label{thmextremals}
	Let $\gamma$ be a horizontal path minimizing the energy between $x$ et $y$, and let $u$ be its minimal control. Then there exists a Lipschitz path $\lambda : [0,1] \to T^* M$ lifting $\gamma$ --- that is for all $t \in [0,1]$, $\lambda(t) \in T_{\gamma(t)}^*M$ --- such that 
	\begin{equation}
	\dot{\lambda}(t)=\sum_{i=1}^m u_i(t) \vec{h}_{X_i}(\lambda(t)) \quad \text{a.e. } t \in [0,1].
	\end{equation}
	Moreover, one of those two conditions is verified:
	\begin{itemize}		
	\item[(N)] for all $t \in [0,1]$ and all $i=1,\dots,m$, it holds $u_i(t)=h_{X_i}(\lambda(t))$, that is $\lambda$ is solution of the differential equation $\dot{\lambda}(t)=\vec{H}(\lambda(t))$;
	\item[(A)] for all $t \in [0,1]$ and all $i =1,\dots,m$, it holds $h_{X_i}(\lambda(t))=0$ and $\lambda(t) \neq 0$.
	\end{itemize}
\end{theorem}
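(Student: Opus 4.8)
The plan is to build the lift $\lambda$ explicitly, by transporting the Lagrange covector $\lambda_1$ of \eqref{multLagranage} backwards along the flow of the control vector field, and then to read off the dichotomy directly from \eqref{multLagranage}. Write $f^t_u := \sum_{i=1}^m u_i(t) X_i$ for the nonautonomous vector field generated by the minimal control, and let $P_{s,t}$ denote its flow, so that $\gamma(t) = P_{0,t}(x)$ and $\gamma(1) = E_x(u)$. Since $\gamma$ minimizes the energy it is parametrized with constant speed, hence $u \in L^\infty$ and $P_{s,t}$ is $C^1$ in space and Lipschitz in time. The first step is the classical first-variation formula for the end-point map: linearizing the control equation $\dot\gamma = f^t_u(\gamma)$ along $\gamma$ and solving the resulting linear Cauchy problem by variation of constants yields, for every $v \in L^2([0,1],\R^m)$,
\[
D_u E_x(v) \;=\; \int_0^1 D_{\gamma(\tau)} P_{\tau,1}\!\left( \sum_{i=1}^m v_i(\tau)\, X_i(\gamma(\tau)) \right) d\tau \;\in\; T_y M .
\]

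Next I would set $\lambda(t) := \big(D_{\gamma(t)} P_{t,1}\big)^{*}\lambda_1 \in T^*_{\gamma(t)} M$, which is a Lipschitz lift of $\gamma$ with $\lambda(1)=\lambda_1$. Pairing the previous display with $\lambda_1$ and using $\big\langle \lambda_1,\, D_{\gamma(\tau)}P_{\tau,1}\big(X_i(\gamma(\tau))\big)\big\rangle = \langle \lambda(\tau), X_i(\gamma(\tau))\rangle = h_{X_i}(\lambda(\tau))$, the multiplier identity \eqref{multLagranage} becomes
\[
\int_0^1 \sum_{i=1}^m v_i(\tau)\, h_{X_i}(\lambda(\tau))\, d\tau \;=\; \nu \int_0^1 \sum_{i=1}^m u_i(\tau)\, v_i(\tau)\, d\tau
\]
for all $v\in L^2([0,1],\R^m)$. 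By $L^2$-duality this forces $h_{X_i}(\lambda(t)) = \nu\, u_i(t)$ for a.e.\ $t$ and every $i$. If $\nu=1$ we obtain alternative (N). If $\nu=0$ we get $h_{X_i}(\lambda(t))\equiv 0$; since $(\lambda_1,\nu)\neq 0$ forces $\lambda_1\neq 0$, and $D_{\gamma(t)}P_{t,1}$ is invertible, $\lambda(t)\neq 0$ for every $t$, which is alternative (A).

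It then remains to identify the differential equation satisfied by $\lambda$. The cotangent lift of the flow $P_{s,t}$ --- the fibrewise-linear symplectomorphism of $T^*M$ over $P_{s,t}$ sending a covector $p$ at a point $q$ to $\big((D_q P_{s,t})^{-1}\big)^{*}p$ at $P_{s,t}(q)$ --- is, by a direct computation in Darboux coordinates using the convention $\sigma(\cdot,\vec{h})=dh$, exactly the nonautonomous Hamiltonian flow of $h_{f^t_u} = \sum_i u_i(t)\, h_{X_i}$, i.e.\ of $\sum_i u_i(t)\,\vec{h}_{X_i}$. By construction $\lambda(t)$ is the image of $\lambda_1$ under the inverse of this lift taken from time $t$ to time $1$, hence the value at time $t$ of the Hamiltonian trajectory issued from $\lambda_1$ at time $1$; therefore $\dot\lambda(t) = \sum_i u_i(t)\,\vec{h}_{X_i}(\lambda(t))$ for a.e.\ $t$, and $\lambda$ is Lipschitz because $u\in L^\infty$. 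Finally, in case (N), substituting $u_i(t)=h_{X_i}(\lambda(t))$ and using $\vec{H} = \sum_i h_{X_i}\vec{h}_{X_i}$ turns this into $\dot\lambda=\vec{H}(\lambda)$; smoothness and autonomy of $H$ then even upgrade $\lambda$ to a smooth curve.

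The only genuinely delicate point --- everything else being bookkeeping built on the Lagrange multiplier rule already recalled --- is this last step: getting the cotangent-lift/Hamiltonian-flow correspondence exactly right, including the direction conventions (pullback versus pushforward, time running from $t$ to $1$ rather than from $0$ to $t$) and the signs implicit in $\sigma(\cdot,\vec{h})=dh$. The first-variation formula for $D_u E_x$ is classical and its justification at the present regularity (control in $L^\infty$, flow $C^1$ in space) is routine Carathéodory theory, the differentiability of $E_x$ having already been recalled; and the passage from \eqref{multLagranage} to the pointwise identity $h_{X_i}(\lambda)=\nu u_i$ uses nothing beyond the fact that an $L^2$ functional annihilating every test direction vanishes.
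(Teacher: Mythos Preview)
Your argument is correct and follows the standard derivation of the Pontryagin-type characterization from the Lagrange multiplier rule: transport $\lambda_1$ backward by the dual of the linearized flow, read off $h_{X_i}(\lambda)=\nu u_i$ from the first-variation formula via $L^2$-duality, and identify the evolution of $\lambda$ as the cotangent lift of the control flow. The only minor remark is that the ``for all $t$'' in condition (A) follows because $h_{X_i}\circ\lambda$ is continuous and vanishes a.e., hence everywhere; you use this implicitly.

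As for comparison: the paper does not prove this theorem. It is stated as a known result, described as ``an immediate consequence of the characterization of energy minimizers by the Lagrange multipliers rule, or \dots\ a version of the Pontryagin maximum principle,'' with a reference to the literature. Your write-up is exactly the kind of argument that reference contains, so there is nothing to contrast: you have supplied a proof where the paper chose to cite one.
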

The conditions (N) and (A) correspond to the normal and abnormal cases of Lagrange multipliers rule, with $\lambda(1)$ corresponding to the multiplier $\lambda_1$ in \eqref{multLagranage}.

\begin{rmk}\label{affine}
From \eqref{multLagranage} the set of normal Lagrange multipliers of a path is an affine space over the linear space generated by its abnormal ones. The same property holds for the corresponding lifts.
\end{rmk}
The Hamiltonian characterization in the normal case allows us to define an exponential map $\exp_x : T_x^* M \to M$, where $\exp_x(\lambda)=\pi \circ e^{\vec{H}}(\lambda)$, where $t \mapsto e^{t\vec{H}}$ denotes the one-parameter group of diffeomorphisms on the cotangent bundle given by the Hamiltonian flow. In other words $\exp_x(\lambda)$ is the extremity at time $1$ of the normal geodesic whose lift verifies $\lambda(0)=\lambda$, that is parametrized by constant speed equal to $\sqrt{2H(\lambda)}$. 
Normal paths are locally length minimizing, and hence are geodesics. We assume that $(M,d_{SR})$ is complete, so that $\vec{H}$ is a complete vector field. 

Note that if a path is normal (resp.\ abnormal), any smaller segment is also normal (resp.\ abnormal) as the restriction of the lift verifies the same conditions.

The lift of a minimizing path given by Theorem \ref{thmextremals} is not necessarily unique and therefore the same horizontal path can be normal and abnormal at the same time. We will call a path \emph{strictly normal} if it is normal and it does not admit an abnormal lift, and \emph{strictly abnormal} if it is abnormal and it does not admit a normal lift. 

As a final remark, if a path is strictly normal, then its normal lift is unique. Indeed, if $\lambda$ and $\mu$ were two distinct normal lifts, then $\lambda(1)-\mu(1)$ would be an abnormal multiplier for this path (cf.\ Remark \ref{affine}).

\section{Branching geodesics}\label{s:branch}

A natural question is whether strictly normal paths can contain non-trivial abnormal subsegments. We will first show that this behaviour is linked to the occurrence of branching normal geodesics, and moreover that such an occurrence is actually equivalent to a jump in the rank of the differential of the end-point map. Then, in the next section, we will show a simple and natural example of this phenomenon. 

First let us define precisely what we will call branching here.

\begin{definition}
A normal geodesic $\gamma$ is \emph{branching} at time $t\in (0,1)$ if there exists a normal geodesic $\gamma'$ such that $\gamma|_{[0,t]}=\gamma'|_{[0,t]}$ and $\gamma|_{[0,t+\varepsilon]} \neq \gamma'|_{[0,t+\varepsilon]}$ for all $\varepsilon>0$.
\end{definition}

Let $\gamma$ be a normal geodesic. For $t \in [0,1]$ we define the set
\begin{equation}
\Pi_t = \{\lambda \in T_{\gamma(0)}^*M \mid \gamma(s) = \pi \circ e^{s\vec{H}}(\lambda) \quad \forall s \leq t \}.
\end{equation}
This set is a non-empty affine space corresponding to the initial normal covectors of the path $\gamma|_{[0,t]}$ given by Theorem \ref{thmextremals}. The set $\Pi_t$ is an isomorphic image of the set of normal Lagrange multipliers of  $\gamma|_{[0,t]}$ and, from Remark \ref{affine}, its dimension is the \emph{corank} of the path $\gamma|_{[0,t]}$, defined as the corank of the application $D_{u_t}E_x$, where $u_t$ is the minimal control of $\gamma|_{[0,t]}$. The function $t \mapsto \Pi_t$ for $t \in [0,1]$ is nonincreasing for the inclusion order and it is thus piecewise constant with some possible jumps where its dimension decreases.

\begin{definition}
The \emph{corank function} of $\gamma$ is the function that associates to a time $t \in [0,1]$ the corank of $\gamma|_{[0,t]}$, that is the function $t \mapsto \dim \Pi_t$. We say that $\gamma$ is \emph{rank-jumping} (or \emph{corank-jumping}) at time $t$ if there is a discontinuity in the corank function for this time.
\end{definition}
The corank function is nonincreasing and piecewise constant and moreover, from the lower semicontinuity of the rank and the $C^1$ regularity of the end-point map, it is left-continuous. We say that $\gamma$ is \emph{rank-jumping} (or \emph{corank-jumping}) at time $t$ if there is a discontinuity in the corank function for this time.

We can now state our theorem linking the phenomenon of branching normal geodesics with rank jumps, which at this point is very elementary to prove.

\begin{theorem}
A normal geodesic $\gamma$ branches at time $t \in (0,1)$ if and only if it is rank-jumping at time $t$.
\end{theorem}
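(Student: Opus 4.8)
The plan is to unwind both sides of the equivalence through the family of affine spaces $\Pi_s$ and the elementary structure of the Hamiltonian flow. Recall that $\gamma$ branches at time $t$ iff there is a normal geodesic $\gamma'$ agreeing with $\gamma$ on $[0,t]$ but differing on every interval $[0,t+\varepsilon]$; and $\gamma$ is rank-jumping at $t$ iff the corank function $s\mapsto\dim\Pi_s$ has a discontinuity at $t$, which since that function is nonincreasing and left-continuous means precisely that $\dim\Pi_t>\dim\Pi_{t+\varepsilon}$ for all small $\varepsilon>0$, i.e. $\Pi_{t+\varepsilon}\subsetneq\Pi_t$ for all small $\varepsilon>0$.

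The key observation is that an initial covector $\lambda\in\Pi_s$ is the same data as a normal geodesic defined on $[0,s]$ starting at $\gamma(0)$ and agreeing with $\gamma$ there: indeed $\lambda\in\Pi_s$ means $\pi\circ e^{r\vec H}(\lambda)=\gamma(r)$ for all $r\le s$. So for the forward direction ($\Leftarrow$), assume $\gamma$ is rank-jumping at $t$, so for each small $\varepsilon>0$ we may pick $\lambda_\varepsilon\in\Pi_t\setminus\Pi_{t+\varepsilon}$. Then $\gamma'_\varepsilon(s):=\pi\circ e^{s\vec H}(\lambda_\varepsilon)$ is a normal geodesic (extend to $[0,1]$ by completeness of $\vec H$) which coincides with $\gamma$ on $[0,t]$ — because $\lambda_\varepsilon\in\Pi_t$ — but does \emph{not} coincide with $\gamma$ on $[0,t+\varepsilon]$ — because $\lambda_\varepsilon\notin\Pi_{t+\varepsilon}$. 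One has to convert "differs on $[0,t+\varepsilon]$ for every small $\varepsilon$" into "differs on $[0,t+\varepsilon]$ for \emph{all} $\varepsilon>0$" as required by the definition of branching; this is automatic since if two geodesics agree on $[0,t+\varepsilon]$ they agree on every subinterval, so disagreement on arbitrarily small $[0,t+\varepsilon]$ forces disagreement on all larger ones. So $\gamma$ branches at $t$ with witness $\gamma'=\gamma'_\varepsilon$ for any fixed small $\varepsilon$.

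For the reverse direction ($\Rightarrow$), suppose $\gamma$ branches at $t$ with witness $\gamma'$. Let $\mu\in T^*_{\gamma(0)}M$ be an initial normal covector of $\gamma'$ (exists by Theorem \ref{thmextremals} applied to $\gamma'$, which is normal and hence locally minimizing). Since $\gamma'|_{[0,t]}=\gamma|_{[0,t]}$ we get $\pi\circ e^{s\vec H}(\mu)=\gamma(s)$ for $s\le t$, i.e. $\mu\in\Pi_t$. On the other hand, for every $\varepsilon>0$, $\gamma'|_{[0,t+\varepsilon]}\ne\gamma|_{[0,t+\varepsilon]}$, so $\pi\circ e^{s\vec H}(\mu)\ne\gamma(s)$ for some $s\le t+\varepsilon$ (necessarily $s\in(t,t+\varepsilon]$), hence $\mu\notin\Pi_{t+\varepsilon}$. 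Therefore $\mu\in\Pi_t\setminus\Pi_{t+\varepsilon}$ for every $\varepsilon>0$, so $\Pi_{t+\varepsilon}\subsetneq\Pi_t$ for all small $\varepsilon$, and since the corank function is left-continuous this forces a strict drop in dimension at $t$: $\dim\Pi_t>\lim_{\varepsilon\to0^+}\dim\Pi_{t+\varepsilon}$, i.e. $\gamma$ is rank-jumping at $t$.

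I do not expect a serious obstacle here — the statement is, as the authors say, "very elementary to prove" once the dictionary between $\Pi_s$, normal lifts, and the corank function is set up. The only mild subtlety is the finite-to-infinitesimal bookkeeping with $\varepsilon$ in both directions (choosing a single witness $\gamma'$ in the $\Leftarrow$ direction, and deducing a strict dimension drop rather than merely $\Pi_{t+\varepsilon}\subsetneq\Pi_t$ in the $\Rightarrow$ direction), which is handled by the monotonicity and left-continuity of $s\mapsto\dim\Pi_s$ already recorded before the theorem statement. I would also note explicitly that in the $\Leftarrow$ direction one should check $\gamma'_\varepsilon$ is genuinely a normal \emph{geodesic} on all of $[0,1]$, which is immediate since it is a projection of a trajectory of $\vec H$ and such projections are locally minimizing, as recalled in the excerpt.
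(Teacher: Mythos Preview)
Your approach is exactly the paper's: translate both conditions into statements about membership in the affine spaces $\Pi_s$ and use monotonicity. The $\Rightarrow$ direction is clean.

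In the $\Leftarrow$ direction there is a small quantifier slip. You choose, for each small $\varepsilon$, a covector $\lambda_\varepsilon\in\Pi_t\setminus\Pi_{t+\varepsilon}$, and then declare that $\gamma'_{\varepsilon}$ for a fixed $\varepsilon$ is a branching witness. But the definition of branching at $t$ requires $\gamma'|_{[0,t+\varepsilon']}\neq\gamma|_{[0,t+\varepsilon']}$ for \emph{every} $\varepsilon'>0$, and for your fixed $\lambda_\varepsilon$ you only know $\lambda_\varepsilon\notin\Pi_{t+\varepsilon}$, not $\lambda_\varepsilon\notin\Pi_{t+\varepsilon'}$ for $0<\varepsilon'<\varepsilon$. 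Your sentence ``disagreement on arbitrarily small $[0,t+\varepsilon]$ forces disagreement on all larger ones'' is true but applies to a single curve, whereas what you have is a \emph{family} $\{\gamma'_\varepsilon\}$, each disagreeing on its own $[0,t+\varepsilon]$; this does not by itself produce one curve disagreeing on all of them.

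The fix is immediate and you already have the ingredient: since the corank function is piecewise constant, $\Pi_{t+\varepsilon}$ is the same affine subspace $\Pi_{t^+}$ for all sufficiently small $\varepsilon>0$, and $\Pi_{t^+}\subsetneq\Pi_t$. Choose a single $\lambda'\in\Pi_t\setminus\Pi_{t^+}$; then $\lambda'\notin\Pi_{t+\varepsilon}$ for all small $\varepsilon$, and by monotonicity for all $\varepsilon>0$, so $\gamma'(s)=\pi\circ e^{s\vec H}(\lambda')$ is the desired witness. This is precisely what the paper does (in one sentence).
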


\begin{proof}
Assume $\gamma$ branches at time $t$. Then the branching geodesic $\gamma'$ has an initial covector $\lambda'$ such that $\lambda' \in \Pi_t$ but $\lambda'\notin \Pi_{t+\varepsilon}$ for all $\varepsilon >0$, which means the rank jumps at $t$.
Conversely, if the rank jumps at time $t$, there is a covector $\lambda'$ in $\Pi_t$ that is not contained in $\Pi_s$ for $s>t$, and the path $\gamma'$ defined by $\gamma'(t)=\pi \circ e^{t\vec{H}}(\lambda')$ branches with $\gamma$ at time $t$.
\end{proof}

An immediate consequence is that a normal path can only branch a finite amount of times (up to the maximal corank of a path, which is the corank of the distribution), and furthermore $\gamma|_{[0,t]}$ must be abnormal.

If $\gamma$ is strictly normal, its corank is $0$ and we have the following corollary, corresponding to the situation encountered in the example from next section.

\begin{cor}
	\label{branchingtheorem}
	A strictly normal geodesic $\gamma$ is branching for some time $t \in (0,1)$ if and only if it contains a non-trivial abnormal subsegment that starts at time $0$. In particular if $t$ is the last branching time, $\gamma|_{[0,t]}$ is a maximal abnormal subsegment.
\end{cor}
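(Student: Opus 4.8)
The plan is to read the corollary off from the theorem just proved, together with the single extra input that a strictly normal geodesic has corank $0$. Write $f(t):=\dim\Pi_t$ for the corank function and recall its established properties: it is integer-valued, nonnegative, nonincreasing, piecewise constant and left-continuous on $[0,1]$, with $f(0)$ finite, so it has only finitely many discontinuities; and $f(1)=0$ precisely because $\gamma$ is strictly normal. The one preliminary observation I would isolate is that, for $t\in(0,1]$, the segment $\gamma|_{[0,t]}$ is abnormal if and only if $f(t)\ge 1$: if $f(t)\ge 1$ then $D_{u_t}E_x$ fails to be surjective, yielding a nonzero covector that annihilates its image, i.e. a nonzero abnormal multiplier; conversely a nonzero abnormal lift of $\gamma|_{[0,t]}$ can be added to the normal lift inherited from $\gamma$, so by Remark \ref{affine} the affine space $\Pi_t$ has dimension at least $1$.

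For the forward direction, assume $\gamma$ branches at some $t\in(0,1)$. By the theorem $f$ is discontinuous at $t$, and since $f$ is nonincreasing and left-continuous this forces $f(t)>\lim_{s\to t^+}f(s)\ge 0$, hence $f(t)\ge 1$. Thus $\gamma|_{[0,t]}$ is a nontrivial abnormal subsegment starting at time $0$.

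For the converse, suppose $\gamma|_{[0,s_0]}$ is abnormal for some $s_0\in(0,1]$; since $\gamma$ itself is not abnormal, in fact $s_0<1$, and $f(s_0)\ge 1$ while $f(1)=0$. By monotonicity and left-continuity, the superlevel set $\{s\in[0,1]:f(s)\ge 1\}$ is a closed interval $[0,t^*]$ with $s_0\le t^*<1$, and $f$ vanishes identically on $(t^*,1]$. Hence $f(t^*)\ge 1>0=\lim_{s\to t^{*+}}f(s)$, so $f$ is discontinuous at $t^*\in(0,1)$ and the theorem produces a branching at $t^*$. Finally, the discontinuities of $f$ all lie in $(0,1)$ and there are finitely many of them, so the branching times of $\gamma$ form a finite nonempty set with a largest element $t$; for that $t$ we have $f(t)\ge 1$ and $f\equiv 0$ on $(t,1]$, so $\gamma|_{[0,t]}$ is abnormal while $\gamma|_{[0,s]}$ is not abnormal for any $s>t$. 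Since every subsegment of $\gamma$ strictly containing $[0,t]$ is of the form $[0,s]$ with $s>t$, this is exactly the statement that $\gamma|_{[0,t]}$ is a maximal abnormal subsegment.

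I do not expect a genuine obstacle: the proof is just bookkeeping on $f$. The only point that needs a little care is guaranteeing that the relevant discontinuity of the corank function occurs at an interior time and not at the endpoint $t=1$, which is why left-continuity of $f$ and the identity $f(1)=0$ coming from strict normality are both used.
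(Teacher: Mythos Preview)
Your argument is correct and is exactly the unpacking the paper has in mind: the corollary is stated there without proof, as an immediate consequence of the branching/rank-jumping theorem together with the observation that strict normality forces $\dim\Pi_1=0$, and that $\gamma|_{[0,t]}$ is abnormal precisely when $\dim\Pi_t\ge 1$ (Remark~\ref{affine}). One cosmetic remark: the claim that ``the discontinuities of $f$ all lie in $(0,1)$'' is slightly stronger than needed and not quite justified (a jump $f(0)>f(0^+)$ is not excluded), but your argument only uses that the \emph{branching times}---i.e.\ the discontinuities in $(0,1)$---are finitely many, which follows from $f$ being integer-valued and bounded.
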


In this situation, if $\gamma$ branches at time $t\in(0,1)$, then it branches in a whole family of distinct normal paths, parametrized by the abnormal Lagrange multipliers of the abnormal subsegment. To be more precise, let $A \subset T_{\gamma(t)}^*M$ be the set of abnormal Lagrange multipliers associated with the maximal abnormal subsegment $\gamma|_{[0,t]}$. Notice that $A \cup \{0\}$ is a vector space, and its dimension is the corank of the abnormal path $\gamma|_{[0,t]}$. Let $\lambda$ be the unique normal lift of $\gamma$. Then for all $\alpha\in A \cup \{0\}$ the family of curves
\begin{equation}\label{eq:family}
s\mapsto \gamma_\alpha(s) =\pi\circ e^{(s-t) \vec{H}}(\lambda(t)+\alpha), \qquad s \in [0,1]
\end{equation}
is a smooth family of normal geodesics, all coinciding with $\gamma$ on the subinterval $[0,t]$, and branching from it at time $t$.

We know that each of the paths in this family is locally minimizing since they are normal. Moreover, if we take a compact subfamily of those, the time at which they are minimizing, starting from the branching point, can be chosen uniformly. 
\begin{theorem}\label{minimalityfamily}
Let $\gamma_\alpha$ be a family of normal paths branching from $\gamma$ at time $t\in (0,1)$, as in \eqref{eq:family}. Then for any compact subset $A_0 \subset A \cup \{0\}$ there exists $\varepsilon>0$ such that $\gamma_a|_{[t-\varepsilon,t+\varepsilon]}$ is the unique length-minimizing path between its extremities, up to reparametrizations, for all $\alpha \in A_0$.
\end{theorem}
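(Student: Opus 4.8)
The plan is to reduce the statement to a compactness argument over $A_0$ combined with a uniform bound on the "conjugate time" (the first time along a normal geodesic at which it stops being locally minimizing) and on the radius of a uniquely-minimizing neighborhood. First I would fix the branching time $t$ and consider the map $\Phi: A_0 \times (-\delta,\delta) \to T^*M$ given by $(\alpha,s) \mapsto e^{s\vec H}(\lambda(t)+\alpha)$, which is smooth by completeness and smoothness of the Hamiltonian flow, and whose projection gives the family $\gamma_\alpha$ near time $t$. Since $A_0$ is compact, $\{\lambda(t)+\alpha : \alpha \in A_0\}$ is a compact subset of $T^*_{\gamma(t)}M$, all of whose elements generate normal geodesics emanating from the single point $\gamma(t)$.

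Next I would invoke the standard fact (see \cite{abb19} or \cite{agrabook}) that, for normal geodesics, local length-minimality holds strictly up to the first conjugate point, and that the first conjugate time depends lower-semicontinuously — in fact, by an open-condition argument on the nondegeneracy of the differential of the exponential map, continuously where it is finite — on the initial covector. Crucially, at $s=0$ (i.e. at the branching point itself) the geodesic $\gamma_\alpha$ is trivially strictly minimizing in a neighborhood, so the conjugate time measured from $\gamma(t)$ is strictly positive for each $\alpha \in A_0$; by lower semicontinuity and compactness of $A_0$, it is bounded below by some $\varepsilon_1 > 0$ uniformly in $\alpha$. This handles local minimality on $[t-\varepsilon,t+\varepsilon]$ for $\varepsilon \le \varepsilon_1$.

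The second ingredient is \emph{uniqueness} of the minimizer up to reparametrization. Here I would use that a point $q = \gamma_\alpha(t)$ admits a uniquely-minimizing normal neighborhood: there is a radius $r(\alpha) > 0$ such that $\exp_{\gamma(t)}$ restricted to covectors of appropriate norm is a diffeomorphism onto a metric ball $B(\gamma(t), r(\alpha))$, and within such a ball the geodesic to any point is the unique minimizer. The radius $r$ can again be taken continuous (hence bounded below on the compact set $A_0$, away from $0$) by the same transversality/implicit-function-theorem reasoning that underlies the continuity of the conjugate time, together with a contradiction-and-limiting argument ruling out a sequence $\alpha_k \to \alpha_\infty$ with $r(\alpha_k) \to 0$: such a sequence would produce, by completeness and Arzelà–Ascoli applied to the competing minimizers, a second minimizer between points arbitrarily close to $\gamma(t)$, contradicting uniqueness at the limit covector $\lambda(t)+\alpha_\infty$. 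Taking $\varepsilon = \min(\varepsilon_1, \varepsilon_2)$ with $\varepsilon_2$ governing the diameter of the image of $[t-\varepsilon,t+\varepsilon]$ staying inside the uniquely-minimizing ball, we obtain the claim.

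The main obstacle I anticipate is making the uniform lower bound on the uniquely-minimizing radius fully rigorous: continuity of the conjugate time is classical, but the passage from "each geodesic is the unique minimizer on a small interval" to "uniformly so over a compact family" requires carefully combining (i) absence of conjugate points, (ii) absence of a distinct minimizer reaching the same endpoint, and (iii) a limiting argument that does not a priori know the limit competitor is itself a normal geodesic of the family. The cleanest route is probably to argue by contradiction: suppose no uniform $\varepsilon$ works, extract $\alpha_k \to \alpha_\infty$, $s_k \to 0^+$, and competing minimizers $\sigma_k \ne \gamma_{\alpha_k}|_{[t-s_k,t+s_k]}$; normalize, apply the compactness of minimizers with bounded energy, pass to the limit to get a minimizer between $\gamma(t)$ and itself distinct from the constant — an immediate contradiction since the only minimizer from $\gamma(t)$ to $\gamma(t)$ is constant. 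This makes the uniqueness part almost immediate, and the conjugate-time part is then the only place where genuine smooth machinery is needed.
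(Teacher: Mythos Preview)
Your approach differs from the paper's and contains two genuine gaps that are specific to the sub-Riemannian setting.

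First, the claim that $\exp_{\gamma(t)}$ restricted to covectors of small norm is a diffeomorphism onto a metric ball $B(\gamma(t),r(\alpha))$ is false in sub-Riemannian geometry. The differential of $\exp_x$ at the origin of $T^*_xM$ has rank equal to $\dim \distr_x < n$, so $\exp_x$ is never a local diffeomorphism there; moreover small metric balls may already contain cut and conjugate points (e.g.\ points on the vertical axis in the Heisenberg group are reached by a continuum of minimizers from the origin, at arbitrarily small distance). There is no Riemannian-style normal neighbourhood available, so the radius $r(\alpha)$ you want to bound from below need not exist at all.

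Second, your limiting argument for uniqueness does not close. If $\sigma_k \neq \gamma_{\alpha_k}|_{[t-s_k,t+s_k]}$ are competing minimizers with $s_k\to 0$, then both $\sigma_k$ and the reference segments have length tending to zero and, after reparametrization on $[0,1]$, converge to the \emph{same} constant path at $\gamma(t)$. Distinctness is not preserved in the limit, so you never produce a ``minimizer from $\gamma(t)$ to itself distinct from the constant''; the contradiction evaporates.

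The paper circumvents both issues by a calibration argument rather than a conjugate-time/compactness-of-minimizers argument. It reduces the theorem to a general statement for any compact $\Lambda\subset T^*_xM$, chooses smooth functions $a^\lambda$ with $d_x a^\lambda=\lambda$ depending continuously on $\lambda$, and shows (by compactness and a quantitative inverse function theorem) that the maps $\phi_t^\lambda=\pi\circ e^{t\vec H}\circ da^\lambda$ are diffeomorphisms on a fixed neighbourhood of $x$ for all $(t,\lambda)\in[-t_0,t_0]\times\Lambda$. A nested-compact-sets estimate then traps any competitor either inside the calibrated region, where Lemma~\ref{minimalitylemma} (essentially \cite[Cor.~4.64]{abb19}) forces it to be strictly longer, or outside it, where a crude length bound applies. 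The uniformity in $\alpha$ comes from the continuity of $(t,\lambda)\mapsto\phi_t^\lambda$, not from any semicontinuity of conjugate or cut times.
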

The proof of Theorem \ref{minimalityfamily} is a small adaptation of the proof for a single normal path, and it is an immediate consequence of the following more general result.
\begin{proposition}
	\label{minimalitythm}
	Let $x\in M$, and let $\Lambda \subset T_{x}^*M$ be a compact set. For $\lambda \in \Lambda$, consider the normal paths $\gamma^{\lambda}(t)=\pi \circ e^{t\vec{H}}(\lambda)$. Then there exists $\varepsilon >0$ such that, for all $\lambda \in \Lambda$, the restriction $\gamma^{\lambda}|_{[-\varepsilon,\varepsilon]}$ is the unique length-minimizing path between its extremities, up to reparametrizations.
\end{proposition}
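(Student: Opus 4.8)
The idea is to localize around each initial covector, run the classical field-of-extremals (Hamilton--Jacobi) calibration there, and then conclude by a finite-cover argument on the compact set $\Lambda$. The degenerate case $H(\lambda)=0$ is immediate: then $\gamma^\lambda$ is a constant path, which is the only curve of length $0$ joining a point to itself. So fix $\lambda_0\in\Lambda$ with $H(\lambda_0)>0$, pick a coordinate chart $(q^1,\dots,q^n)$ centred at $x$, and for $\mu=\sum_i\mu_i\,dq^i\in T^*_xM$ set $\varphi_\mu(q)=\sum_i\mu_i q^i$, so that $d\varphi_\mu(x)=\mu$ and $\mu\mapsto\varphi_\mu$ is linear, in particular smooth.

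Next I would build a field of extremals around the curves $\gamma^\mu$, uniformly for $\mu$ near $\lambda_0$. Let $\mathcal L^\mu_0\subset T^*M$ be the graph of $d\varphi_\mu$ over a neighborhood $U$ of $x$ and $\mathcal L^\mu_t=e^{t\vec H}(\mathcal L^\mu_0)$. Since $\mathcal L^\mu_0$ is transverse to the fibers of $T^*M\to M$ and transversality is an open condition, there exist $\delta>0$, a neighborhood $U$ of $x$, and a neighborhood $\mathcal N\ni\lambda_0$ such that for all $|t|\le\delta$ and all $\mu\in\mathcal N$ the submanifold $\mathcal L^\mu_t$ is the graph of $d_qS^\mu_t$ over $U$, where $S^\mu_t$, normalized by $S^\mu_0=\varphi_\mu$, is a generating function solving the Hamilton--Jacobi equation $\partial_tS^\mu_t+H(q,d_qS^\mu_t)=0$; by continuity of the Hamiltonian flow and of $\mu\mapsto\varphi_\mu$, all these data are controlled uniformly over $\mathcal N$, on which $H$ is in particular bounded. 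The calibration is the fibrewise Fenchel inequality $L(q,v)\ge\langle p,v\rangle-H(q,p)$ --- valid because $H$ is the Legendre transform of the sub-Riemannian Lagrangian $L(q,v)=\tfrac12 g_q(v,v)$ on $\distr_q$, with equality iff $v=\sum_i h_{X_i}(p)\,X_i(q)$: for any horizontal $\eta:[-\delta,\delta]\to U$ one gets
\[
J(\eta)=\int_{-\delta}^{\delta} L(\eta,\dot\eta)\,dt\ \ge\ \int_{-\delta}^{\delta}\Big(\langle d_\eta S^\mu_t,\dot\eta\rangle-H(\eta,d_\eta S^\mu_t)\Big)dt=\int_{-\delta}^{\delta}\tfrac{d}{dt}\,S^\mu_t(\eta(t))\,dt=S^\mu_\delta(\eta(\delta))-S^\mu_{-\delta}(\eta(-\delta)),
\]
with equality iff $t\mapsto(\eta(t),d_{\eta(t)}S^\mu_t)$ is an integral curve of $\vec H$ contained in $\bigcup_t\mathcal L^\mu_t$.

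From this I would read off the conclusion. As $\mu=d\varphi_\mu(x)\in\mathcal L^\mu_0$, the curve $\gamma^\mu$ lifts to $t\mapsto e^{t\vec H}(\mu)\in\mathcal L^\mu_t$ and hence attains equality above; so $\gamma^\mu|_{[-\delta,\delta]}$ minimizes $J$, at fixed time and endpoints, among horizontal curves contained in $U$, i.e.\ it is a constant-speed length-minimizer among such curves. Since each $\mathcal L^\mu_t$ is a graph over $U$, the extremal of the field is uniquely determined by its endpoint over $\eta(\delta)$, so $\gamma^\mu|_{[-\delta,\delta]}$ is the \emph{only} such minimizer, up to reparametrization. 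Finally, because $(M,d_{SR})$ is complete and $d_{SR}$ induces the manifold topology, a genuine length-minimizer between $\gamma^\mu(-\delta)$ and $\gamma^\mu(\delta)$ exists, has length $\le 2\delta\sqrt{2H(\mu)}$, and --- for $\delta$ small compared with the size of $U$, uniformly over $\mu\in\mathcal N$ since $H$ is bounded there --- cannot leave $U$; hence it is $\gamma^\mu|_{[-\delta,\delta]}$ up to reparametrization. Covering $\Lambda$ by finitely many such $\mathcal N$ and letting $\varepsilon$ be the minimum of the corresponding $\delta$'s finishes the proof, since a subsegment of a minimizer that is unique up to reparametrization is again one.

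The single delicate point is the \emph{uniformity} in $\mu$ --- that $U$, $\delta$ and the generating functions $S^\mu_t$ can be chosen to work for all $\mu$ in a fixed neighborhood of $\lambda_0$ --- which, as indicated, rests only on the openness of the fiber-transversality condition, smooth dependence of the flow on initial conditions, and compactness. The calibration itself is the textbook single-geodesic argument and needs no new input; in particular, nearby abnormal geodesics are irrelevant, since the construction uses only that $\mathcal L^\mu_0$ is a graph.
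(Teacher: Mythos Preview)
Your proof is correct and follows essentially the same route as the paper's: both build linear functions $a^\lambda$ (your $\varphi_\mu$) with $d_x a^\lambda=\lambda$, run the field-of-extremals/Hamilton--Jacobi calibration over a neighborhood of $x$, and rule out competitors that leave that neighborhood by a length estimate. The only organizational differences are that the paper packages the calibration step as a black-box lemma and establishes the uniformity directly over all of $\Lambda$ (via a quantitative inverse function theorem and explicit nested compacts $K_2\subset K_1\subset\Omega_0$) rather than through a finite cover.
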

\begin{proof}
We want to apply the following obvious extension of \cite[Corollary 4.64]{abb19}.
\begin{lemma}
	\label{minimalitylemma}
	Let $T>0$ and $a \in C ^{\infty}(M)$. Let $\Omega_0$ be an open subset of $M$ such that, for all $t \in [-T,T]$, the map $\pi \circ e^{t \vec{H}} \circ da|_{\Omega_0}$ is a diffeomorphism from $\Omega_0$ on its image $\Omega_t$. Let $\lambda_0 \in \mathcal{L}_0 \cap \pi^{-1}(\Omega_0)$ where $\mathcal{L}_0=\{d_z a \mid z\in M \}$ and define $\bar{\gamma}(t) = \pi \circ e^{t \vec{H}}(\lambda_0)$, for $t \in [-T,T]$. Then $\bar{\gamma}$ is the unique length-minimizing path, up to reparametrization, among all horizontal paths $\gamma : [-T,T] \to M$ with the same extremities and such that $\gamma(t) \in \Omega_t$ for all $t \in [-T,T]$.
\end{lemma}
To do it, for $\lambda \in \Lambda$, we construct a family of functions $a^{\lambda}$, continuous with respect to $\lambda$ such that $d_{x}a^{\lambda}=\lambda$. Indeed, the theorem being a local result, we can suppose to be in a coordinates system $(x_1,\dots,x_n)$ on a neighborhood of $x$, and if $\lambda=\sum_{i=1}^{n} \lambda_i dx_i$, we define $a^{\lambda}(x_1,\dots,x_n)=\sum_{i=1}^{n} \lambda_i x_i$.
	
Let $\Omega_0$ be a relatively compact neighborhood of $x$ which, for small $T$, contains $\gamma^{\lambda}|_{[-T,T]}$ for all $\lambda \in \Lambda$. Consider the maps $\phi_{t}^{\lambda}=\pi \circ e^{t \vec{H}} \circ da^\lambda|_{\Omega_0}$, for $t \in [-T,T]$ and $\lambda \in T_{x}^*M$, noting that they are continuous in $t$ and $\lambda$. For all $\lambda \in \Lambda$, we have $\phi_0^{\lambda}=\mathrm{Id}|_{\Omega_0}$, so by semi-continuity of the rank, and by the fact that $\overline{\Omega}_0$ is compact, there exists a neighborhood of $\{0\} \times {\Lambda}$ where $d_x\phi_{t}^{\lambda}$ in an isomorphism. By compactness of $\Lambda$, this neighborhood contains a set of the form $[-t_0,t_0] \times \Lambda$. By the inverse function theorem, and up to reducing $\Omega_0$, $\phi_{t}^{\lambda}$ is a diffeomorphism on its image for all $(t,\lambda) \in [-t_0,t_0] \times \Lambda$. Indeed, by compactness, the neighborhood of $x$ given by the inverse function theorem can be uniformly chosen for $(t,\lambda) \in [-t_0,t_0] \times \Lambda$, by using a quantitative version of the latter, see \cite[Theorem B.1.4] {rifford14}. 
	
	Let $K_1 \subset \Omega_0$ be a compact neighborhood of $x$. By continuity, there exists a neighborhood of $\{0\} \times \Lambda$ such that $K_1 \subset \Omega_t^\lambda=\phi_{t}^{\lambda}(\Omega_0)$ for all $(t,\lambda)$ in this neighborhood. Since $\Lambda$ is compact, we get $t_1 \in (0,t_0]$ such that $K_1 \subset \Omega_t^{\lambda}$ for all $t\in [-t_1,t_1]$ and all $\lambda \in \Lambda$.  Let then $K_2$ be a compact neighborhood of $x$ included in the interior of $K_1$ and we find $t_2 \in (0,t_1]$ such that $\gamma^{\lambda}(t) \in {K_2}$ for all $\lambda \in \Lambda$ and all $t \in [-t_2,t_2]$.	
	Finally, let us pose $\delta = d_{SR}(K_2,M \setminus K_1) >0$, and 
\begin{equation}
	\varepsilon = \min \left(t_2,\frac{\delta}{4 \sqrt{2\max_{\lambda \in \Lambda}H(\lambda)}}\right).
	\end{equation}	
	Let $\lambda \in \Lambda$ and $\gamma$ be a horizontal path defined for $[-\varepsilon,\varepsilon]$ such that $\gamma(-\varepsilon)=\gamma^{\lambda}(-\varepsilon)$ and $\gamma(\varepsilon)=\gamma^{\lambda}(\varepsilon)$, but whose image $\Gamma$ is distinct from the image of $\gamma^{\lambda}$. If $\Gamma \subset K_1$, then $\gamma(t) \in \Omega_t^{\lambda}$ for all $t$, and we can thus apply Lemma \ref{minimalitylemma} to conclude that $\ell(\gamma)>\ell(\gamma^{\lambda}|_{[-\varepsilon,\varepsilon]})$. Otherwise, there exists $t^*\in [-\varepsilon,\varepsilon]$ such that $\gamma(t^*) \notin K_1$. Then, since $\gamma(-\varepsilon) \in K_2$, we have:
	\begin{equation}
	l(\gamma) \geq \delta  > 2 \varepsilon \sqrt{2H(\lambda)} = \ell(\gamma^{\lambda}|_{[-\varepsilon,\varepsilon]}).\qedhere
	\end{equation}
\end{proof}

\section{An example of branching strictly normal geodesic}\label{s:example1}

Let us stress that normal geodesics cannot branch in real-analytic sub-Riemannian structures, that is when the corresponding Hamiltonian function is real-analytic. In fact in this case, by the Cauchy-Kowalevski theorem, normal geodesics, which are projections of the solutions of the Hamiltonian equation, are real-analytic paths. By the principle of permanence, two distinct real-analytic paths cannot be equal on a segment. That is, the following well-known fact holds:
\begin{proposition}
	If $H$ is real-analytic, normal geodesic cannot branch.
\end{proposition}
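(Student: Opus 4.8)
The plan is to upgrade the smoothness of normal geodesics to real-analyticity and then invoke the identity principle. First I would observe that the hypothesis ``$H$ is real-analytic'' is meaningful only with respect to a real-analytic structure on $M$ (hence on $T^*M$, and with $\pi$ real-analytic), and under that reading the Hamiltonian vector field $\vec H$ is real-analytic as well, being built from $H$ and the symplectic form $\sigma$ by an algebraic operation. By the real-analytic version of the Cauchy--Lipschitz theorem (equivalently, the Cauchy--Kowalevski theorem applied to the ODE system $\dot\lambda = \vec H(\lambda)$), the local flow $(s,\lambda)\mapsto e^{s\vec H}(\lambda)$ is real-analytic, jointly in $s$ and in the initial condition, on its open domain of definition; by completeness of $\vec H$ it is defined for all $s\in\R$.

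Consequently, for every $\lambda_0\in T_x^*M$ the curve $s\mapsto \gamma(s)=\pi\circ e^{s\vec H}(\lambda_0)$ is a real-analytic map from $[0,1]$ to $M$, since $\pi$ is real-analytic. Thus every normal geodesic is (the restriction of) a real-analytic curve.

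Now suppose for contradiction that a normal geodesic $\gamma\colon[0,1]\to M$ branches at some time $t\in(0,1)$, witnessed by a normal geodesic $\gamma'\colon[0,1]\to M$ with $\gamma|_{[0,t]}=\gamma'|_{[0,t]}$ and $\gamma|_{[0,t+\varepsilon]}\neq\gamma'|_{[0,t+\varepsilon]}$ for all $\varepsilon>0$. By the previous paragraph both $\gamma$ and $\gamma'$ are real-analytic on the connected interval $[0,1]$, and they coincide on the nondegenerate subinterval $[0,t]$. Passing to real-analytic charts and applying the identity principle for real-analytic functions of one variable, together with the usual connectedness argument (the set where $\gamma$ and $\gamma'$ agree to infinite order is nonempty, open and closed in $[0,1]$), we conclude $\gamma\equiv\gamma'$ on $[0,1]$, contradicting the branching assumption. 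This proves the proposition.

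There is essentially no obstacle here; the only two points worth a line of care are the precise sense in which solutions of an analytic ODE on a manifold depend real-analytically on the time variable (standard after localizing in charts), and the implicit assumption of a compatible real-analytic structure on $M$ making $H$, $\pi$, and the chart transitions analytic. Under that convention the argument goes through verbatim.
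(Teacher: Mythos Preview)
Your proof is correct and follows essentially the same approach as the paper: use the Cauchy--Kowalevski theorem to deduce that normal geodesics are real-analytic curves, and then apply the identity principle (principle of permanence) to rule out branching. Your write-up is more detailed, but the underlying argument is identical.
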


For building an example, we need to find a smooth, but non real-analytic structure, in which there is an abnormal geodesics that becomes strictly normal. A natural idea is to start from a structure admitting non-trivial abnormal geodesics (the simplest example being the flat Martinet structure) and ``glue'' it to a structure that do not admit non-trivial abnormal paths, like the Heisenberg structure. In fact, this works exactly as stated, and this is the idea that led us to the discovery of branching geodesics.

Let $\theta : \R \to [0,1]$ be a smooth non-decreasing function such that $\theta(t)=0$ if $t\leq 0$, $\theta(t)>0$ if $t>0$, and $\theta(t)=1$ if $t \geq 1$. Let $A(x,y)=x\theta(y) + x^2 \theta(1-y)$. Consider a rank $2$ sub-Riemannian structure on $\R^3$ defined by the following vector fields:
\begin{equation}
X= \partial_x \qquad Y=\partial_y + A(x,y) \partial_z,
\end{equation}
so that we have a flat Martinet structure on the half-space $y \leq 0$ and a Heisenberg one for $y \geq 1$. The Lie bracket between those vector fields is:
\begin{equation}
[X,Y]=B(x,y)\partial_z, \qquad\text{where}\qquad B(x,y) = \partial_x A(x,y) = [\theta(y) + 2x  \theta(1-y)],
\end{equation}
so that $\Span \{X,Y,[X,Y]\}= \mathbb{R}^3$, except on the so-called \emph{Martinet surface}
\begin{equation}
\Sigma = \{(x,y,z) \in \mathbb{R}^3 \mid B(x,y)=0\}.
\end{equation}
At all points in $\Sigma$, we have $[X,[X,Y]] = 2 \theta (1-y) \partial_z \neq 0$, therefore $\Sigma$ is smooth and the distribution is bracket-generating. It is well-known that abnormal paths for this distribution are exactly the horizontal ones contained in $\Sigma$, see for example \cite{montgomery02,rifford14}.
To characterize normal geodesics, the Hamiltonian function is
\begin{equation}
H=\frac12 \left(h_{X_1}^2+h_{X_2}^2\right)=\frac12 \left(p_x^2+(p_y + A(x,y)p_z)^2\right).
\end{equation}
The Hamiltonian vector field is thus, in coordinates $(x,y,z,p_x,p_y,p_z)$:
\begin{equation}
\label{hamiltonien}
\vec{H}= \begin{pmatrix} \frac{\partial H}{\partial p_x}\\[0,3em]
 \frac{\partial H}{\partial p_y} \\[0,3em]
 \frac{\partial H}{\partial p_z} \\[0,3em]
 -\frac{\partial H}{\partial x} \\[0,3em]
 -\frac{\partial H}{\partial y} \\[0,3em]
 -\frac{\partial H}{\partial z} \\[0,3em]
 \end{pmatrix}
 =
 \begin{pmatrix} p_x\\[0,3em]
 	p_y + A(x,y)p_z  \\[0,3em]
 	(p_y + A(x,y)p_z )A(x,y) \\[0,3em]
 	-p_z (p_y + A(x,y)p_z )B(x,y) \\[0,3em]
 	-p_z(p_y + A(x,y)p_z )\partial_y A(x,y) \\[0,3em]
 	0 \\[0,3em]
 \end{pmatrix}.
 \end{equation}
In particular, the path in the cotangent bundle $(0,t,0,0,1,0)$, for $t \in \R$ is an integral curve of $\vec{H}$, and therefore the lift of the normal geodesic $\gamma(t) = (0,t,0)$. For $t<0$, its projection $\gamma$ is contained in the Martinet surface $\Sigma$, and therefore this part of the curve is abnormal. Indeed, for every $\alpha \neq 0$, the path $(0,t,0,0,0,\alpha)$ is an abnormal lift of this geodesic. As soon as $t>0$, however, $\gamma$ is not contained in $\Sigma$ and therefore any such a segment is strictly normal. It is quite natural for this to happen as the Heisenberg structure has no abnormal geodesics. So if we consider the path $\gamma(t)=(0,t,0)$ for $t \in  [-T,T]$ for some $T>0$, it has a maximal abnormal subsegment $[-T,0]$ and therefore by Corollary \ref{branchingtheorem}, it branches at time $t=0$.

What happens is that, starting at $t=-T$ and until $t=0$, the differential of the end-point map has a 1-dimensional cokernel for the corresponding control, which is the family of covectors $(0,0,\alpha)$ for $\alpha \in \R$, and thus the space of initial covectors of normal lifts for this path is the 1-dimensional affine space $\{(0,1,\alpha)\mid \alpha \in \R\}$. Once time $t=0$ is attained the abnormal geodesic can still be prolonged (as the trace of the distribution in $\Sigma$) but it loses its normal status, becoming strictly abnormal. Meanwhile, the 1-dimensional family of normal lifts can be prolonged yielding a family of distinct geodesics, which are all strictly normal. In Figure \ref{branchement} we computed, using the Euler method, some of those geodesics $\gamma_\alpha$, for different values of $\alpha$.


\begin{figure}[ht]
	\centering
\includegraphics[width=\textwidth]{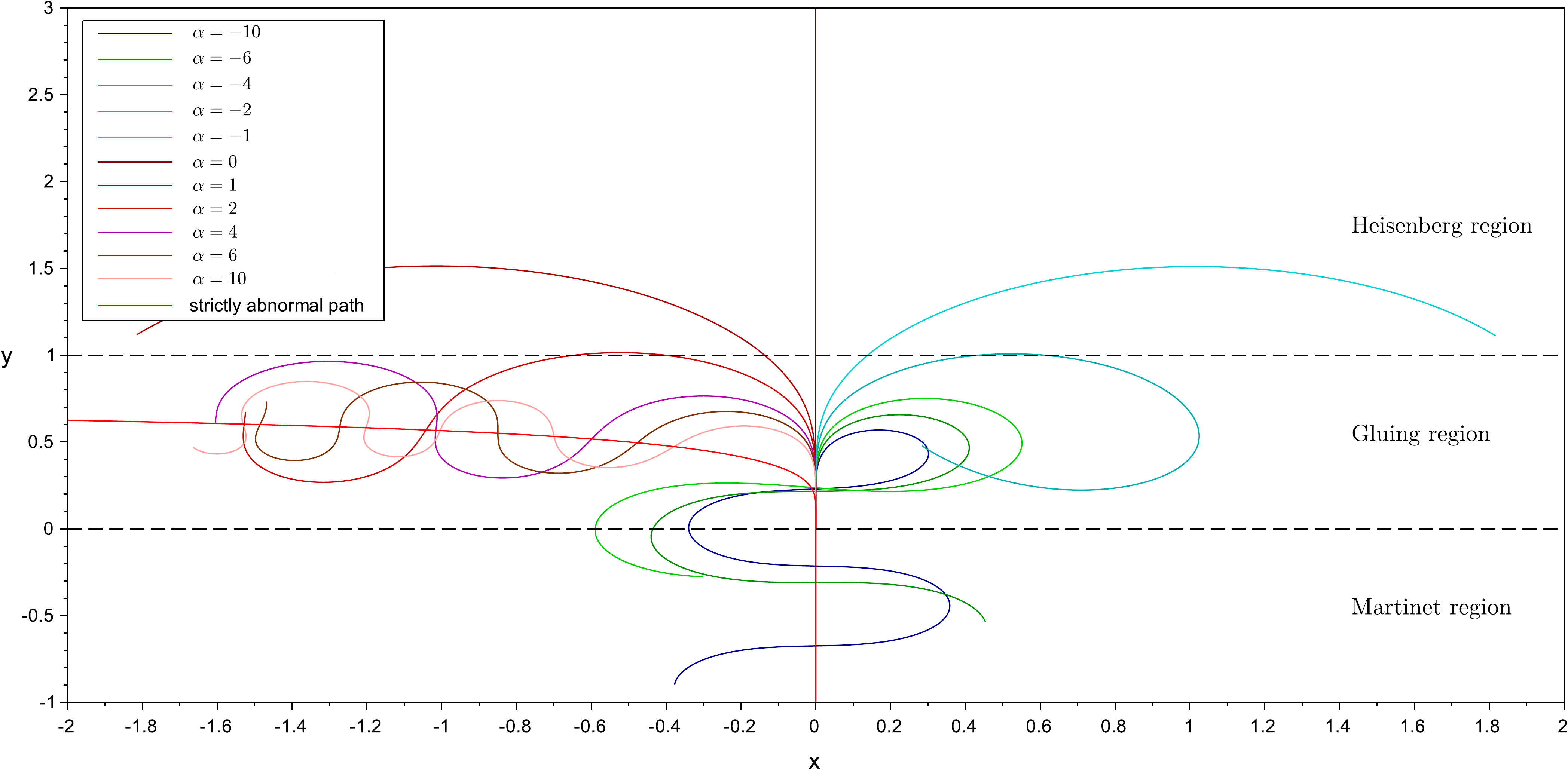}
	\caption{Numerical plot of the branching geodesics $\gamma_\alpha$, projected on the $xy$ plane. Notice that the abnormal path lies in the Martinet surface, which must bend in order to avoid the Heisenberg region.}\label{branchement}
\end{figure}

Finally, we observe that the collection of those normal geodesics do describe an embedded surface of $\R^3$, at least locally around the $y$-axis (which is the normal geodesic with initial covector $(0,1,0)$) as shown in this result:

\begin{proposition}\label{p:spray}
	The map $\Phi : \mathbb{R}^2 \to \mathbb{R}^3$ defined by $\Phi(t,\alpha)=\pi\circ e^{(t+T)\vec{H}}(\lambda_\alpha)$, where $\lambda_{\alpha}$ is the initial covector $(0,1,\alpha)$ at point $(0,-T,0)$, is an embedding on a neighborhood of any point $(t,0)$ with $t>0$.
\end{proposition}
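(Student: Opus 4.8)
The plan is to show that the differential of $\Phi$ has rank $2$ at each point $(t,0)$ with $t>0$. Since the domain of $\Phi$ is two-dimensional, this makes $\Phi$ an immersion there, and by the local embedding theorem an immersion restricts to an embedding on a sufficiently small neighborhood of the point, which is exactly the assertion (the map $\Phi$ being smooth because it is a composition of the complete Hamiltonian flow with the projection $\pi$).

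First I would pin down the curve $t\mapsto\Phi(t,0)$. Solving the Hamiltonian system $\dot\lambda=\vec H(\lambda)$ with initial datum $(0,-T,0,0,1,0)$ and using that $A(0,y)=0$ and $\partial_yA(0,y)=0$, one checks directly from \eqref{hamiltonien} that the extremal is $s\mapsto\Gamma^0(s)=(0,-T+s,0,0,1,0)$; hence $\Phi(t,0)=(0,t,0)$ is the geodesic $\gamma$ considered above, and in particular
\begin{equation}
\partial_t\Phi(t,0)=d\pi\bigl(\vec H(0,t,0,0,1,0)\bigr)=(0,1,0).
\end{equation}

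The core of the argument is the computation of $\partial_\alpha\Phi(t,0)$. Write $s=t+T$, and let $V(s)=(X',Y',Z',P_x',P_y',P_z')(s)$ be the derivative at $\alpha=0$ of $s\mapsto e^{s\vec H}(0,-T,0,0,1,\alpha)$; it solves the variational equation $\dot V=D\vec H(\Gamma^0(s))\,V$ with $V(0)=(0,0,0,0,0,1)$. Along $\Gamma^0$ one has $x=p_x=p_z=0$ and $A=\partial_yA=0$, so most entries of the Jacobian of the vector field \eqref{hamiltonien} vanish there, and the linearized system reduces to $\dot P_z'=0$, $\dot P_y'=0$, $\dot Y'=P_y'$, $\dot P_x'=-\theta(-T+s)P_z'$, $\dot X'=P_x'$, and $\dot Z'=\theta(-T+s)X'$. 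Integrating with the initial condition above gives $P_z'\equiv1$, $P_y'\equiv0$, $Y'\equiv0$, $P_x'(s)=-\int_0^s\theta(-T+\sigma)\,d\sigma$, and $X'(s)=\int_0^sP_x'(\sigma)\,d\sigma$. Since $\theta$ vanishes on $(-\infty,0]$ and is strictly positive on $(0,\infty)$, for $s=t+T$ with $t>0$ we have $P_x'(\sigma)<0$ for $\sigma\in(T,t+T]$, whence $X'(t+T)=\int_T^{t+T}P_x'(\sigma)\,d\sigma<0$. Therefore $\partial_\alpha\Phi(t,0)=\bigl(X'(t+T),0,Z'(t+T)\bigr)$ has nonvanishing first component.

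Consequently $\partial_t\Phi(t,0)=(0,1,0)$ and $\partial_\alpha\Phi(t,0)$ are linearly independent, $d\Phi_{(t,0)}$ has rank $2$, and the claim follows as explained. The one delicate step is the variational computation: one must evaluate the Jacobian of the rather involved vector field \eqref{hamiltonien} correctly along the degenerate extremal $\Gamma^0$. What makes this manageable is precisely that $x$, $p_x$, $p_z$, $A$, and $\partial_yA$ all vanish on $\Gamma^0$, which decouples the linearized equations into a simple chain; and the sign condition $\theta>0$ on $(0,\infty)$ --- equivalently, the fact that $\gamma$ genuinely enters the Heisenberg-type region past $y=0$ --- is exactly what forces $X'(t+T)\neq0$ in the range $t>0$ (whereas for $t\le0$ one finds $\partial_\alpha\Phi\equiv0$, consistently with the branching at $t=0$).
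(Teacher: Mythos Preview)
Your proof is correct and follows essentially the same route as the paper: both compute $\partial_t\Phi(t,0)=\partial_y$ and then show that the $x$-component of $\partial_\alpha\Phi(t,0)$ is nonzero for $t>0$, yielding rank two. The only cosmetic difference is that the paper integrates $x^\alpha(t)$ directly (using $p_z^\alpha\equiv\alpha$) and differentiates at $\alpha=0$, whereas you set up the full variational system along $\Gamma^0$; your handling of the time shift $s=t+T$ is in fact slightly cleaner than the paper's.
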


\begin{proof}
Since $\Phi(t,0) = (0,t,0)$, we have $\frac{\partial \Phi}{\partial t} (t,0) = \partial_y$.
		So we just have to show that $\frac{\partial \Phi}{\partial \alpha} (t,0)$ is independent from $\partial_y$, which we will do by pointing out that its $x$ component is non zero. Let 
		\begin{equation}
t\mapsto		(x^\alpha(t),y^\alpha(t),z^\alpha(t),p_x^\alpha(t),p_y^\alpha(t),p_z^\alpha(t))
		\end{equation}
		be the solution of Hamilton's equation \eqref{hamiltonien}, with initial condition $(0,-T,0,0,1,\alpha)$ at time $t=0$, in such a way that $\Phi(t,\alpha) = (x^\alpha(t),y^\alpha(t),z^\alpha(t))$. We observe that $p_z^{\alpha}(t) = \alpha$ for all times, and thus:
\begin{equation}
x^{\alpha}(t)=\int_0^t p_x^{\alpha}(s)ds=\int_0^t \left(\int_0^s \dot{p}_x^{\alpha}(\tau) d\tau\right)ds =-\alpha \int_0^t \left(\int_0^s f(\alpha,\tau)d\tau\right)ds.
\end{equation}
Here $f(\alpha,\tau)$ is a smooth function whose expression can be obtained from \eqref{hamiltonien}. We notice only that $f(0,\tau)=\theta(\tau)$. So
	\begin{equation}
\left.	\frac{\partial x^{\alpha}(t)}{\partial \alpha} \right|_{\alpha=0} = - \int_0^t \int_0^s\theta(\tau)d\tau ds,
	\end{equation}
	which is non-zero by the properties of $\theta$.
\end{proof}

\begin{remark}
As anticipated in Section \ref{s:magnetic}, any smooth $A$ such that $A(0,y)=0$, and such that $B=\partial_x A$ satisfies $B(x,y) =x$ for $y<0$, and $B(0,y)>0$ when $y>0$, yields a one-parameter family of branching geodesics, verifying Proposition \ref{p:spray}.
\end{remark}

\section{Normal geodesics with multiple branching}\label{s:multiple}

In the example from the previous section, the corank function of a path $\gamma_{\alpha}$ starting at $t=-T$ is equal to one for $t\leq 0$ and zero for $t>0$. From this, we can easily construct any kind of corank function, and therefore any kind of normal branching. To do that, we consider  a suitable product of our example. 

The product $M_1 \times M_2$ of sub-Riemannian manifolds $M_1$ and $M_2$ has a product sub-Riemannian structure simply defined as the direct sum of the distributions and the metrics in $M_1$ and $M_2$. It is then easy to see that a path $\gamma$ is a geodesic in $M$ if and only if $\gamma_1$ and $\gamma_2$ are geodesics in $M_1$ and $M_2$, where $\gamma_i$ is the projection of $\gamma$ on $M_i$. Moreover, if $\lambda_1$ and $\lambda_2$ are normal Lagrange multipliers for $\gamma_1$ and $\gamma_2$ respectively, $(\lambda_1,\lambda_2)$ is a normal Lagrange multiplier for $\gamma$, and the converse is true. The same is true for abnormal multipliers, so the corank of a path $\gamma$ is the sum of the coranks of the $\gamma_i$. From these observations, we get the following result, which gives a complete description of what kind of branching one can expect:
\begin{theorem}
	Let $f:[0,1] \to \N$ be a nonincreasing left-continuous function. Then there exists a sub-Riemannian manifold $M$ and a normal geodesic $\gamma$ on $M$ such that its corank function coincides with $f$. In particular $\gamma$ branches at the jumps of $f$.
\end{theorem}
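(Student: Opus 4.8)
The plan is to reduce an arbitrary $f$ to its jump data, to realize each elementary building block by a time-shifted copy of the example of Section~\ref{s:example1}, and then to glue them together with the product construction discussed above.

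Since $f$ is nonincreasing and $\N$-valued, it takes finitely many values and has finitely many jumps, all located in $(0,1)$; I list these jump times \emph{with multiplicity equal to the size of the jump} as $0<s_1\le\dots\le s_N<1$, and set $c=f(1)=\min f$. Left-continuity then gives $f(t)=c+\#\{\,j:t\le s_j\,\}$ for $t\in(0,1]$, that is $f=c+\sum_{j=1}^{N}\mathbf{1}_{[0,s_j]}$ on $(0,1]$. For each $j$ I take a copy $M^{(j)}$ of $\R^3$ carrying the Martinet--Heisenberg structure of Section~\ref{s:example1}, together with the path $\gamma^{(j)}(t)=(0,t-s_j,0)$, $t\in[0,1]$. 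This is a time-shift of the curve $(0,t,0)$, hence still a constant-speed normal geodesic, and the analysis of Section~\ref{s:example1} shows that $\gamma^{(j)}|_{[0,t]}$ lies in the Martinet surface $\Sigma$ --- and is therefore abnormal of corank $1$ --- precisely when $t\le s_j$, while it is strictly normal of corank $0$ once $t>s_j$; thus the corank function of $\gamma^{(j)}$ equals $\mathbf{1}_{[0,s_j]}$ on $(0,1]$. To absorb the constant $c$ I take $c$ additional copies of the same $\R^3$-structure and, in each, the geodesic $\delta^{(i)}(t)=(0,t-2,0)$, which stays inside $\Sigma$ for all $t\in[0,1]$ and hence has corank function identically $1$ (a copy of the flat Martinet structure would serve equally well).

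Finally I set $M=M^{(1)}\times\dots\times M^{(N)}\times(\R^3)^{c}$ with the product sub-Riemannian structure --- which is bracket-generating and complete --- and $\gamma=(\gamma^{(1)},\dots,\gamma^{(N)},\delta^{(1)},\dots,\delta^{(c)})$. By the facts on products of sub-Riemannian structures recalled above, $\gamma$ is a normal geodesic and its corank function is the sum of the corank functions of the factors, namely $\sum_{j=1}^{N}\mathbf{1}_{[0,s_j]}+c=f$ on $(0,1]$ (away from the trivial value $\dim M$ at $t=0$), and by the equivalence between branching and corank-jumping proved in Section~\ref{s:branch}, $\gamma$ branches exactly at the jump times $s_j$ of $f$. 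The only genuine computation is the determination of the corank function of the single-jump block $\gamma^{(j)}$, which is already carried out in Section~\ref{s:example1}; all the rest is bookkeeping about products, so I do not anticipate any real obstacle.
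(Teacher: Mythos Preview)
Your approach is the same as the paper's: take a product of time–shifted copies of the Martinet--Heisenberg example and use the additivity of corank under products. The cosmetic differences (your $\delta^{(i)}(t)=(0,t-2,0)$ in place of the paper's constant path $0_{\R^3}$ for the $f(1)$ factors) do not matter.

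There is one small gap. You assert that all jumps of $f$ lie in $(0,1)$, but left-continuity is vacuous at the left endpoint, so $f(0)>f(0^+)$ is permitted by the hypotheses. You notice this yourself when you say your corank function equals $f$ only ``away from the trivial value $\dim M$ at $t=0$'', but you then drop the point. The paper does not: it sets $M=N^{f(0)}$ rather than $N^{f(0^+)}$, filling the extra $a=f(0)-f(0^+)$ slots with a strictly normal geodesic $\gamma_0$ (e.g.\ $\gamma_0(t)=(0,t,0)$) whose corank is $0$ for every $t>0$. These added factors do not alter the corank function on $(0,1]$, but they make the corank of the \emph{distribution} of $M$ equal to $f(0)$, which is the value the paper attaches to the corank at $t=0$ (``constant paths have corank equal to the corank of the distribution''). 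In your product the corank of the distribution is only $N+c=f(0^+)$, so as written your construction realizes $f$ only when $f$ is right-continuous at $0$; adding $f(0)-f(0^+)$ strictly normal factors fixes this.
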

\begin{proof}
	Constant paths have corank equal to the corank of the distribution, therefore the corank of $M$ must be equal to $f(0)$. So we define $M$ as the sub-Riemannian product $M=N^{f(0)}$, where $N$ is the sub-Riemannian structure in $\R^3$ defined in Section \ref{s:example1}. Denote then by $a=f(0)-f(0^+)$ ($f(0^+)$ being the right limit of $f$ at $0$), $b=f(1)$ and $t_1,\dots,t_k$ the times of the discontinuities of $f$ in $(0,1)$, each one repeated multiple times according to the amplitude of the discontinuity. Denote as previously by $\gamma$ the path in $N$ defined by $\gamma(t)=(0,t,0)$ for $t\in \R$ and take $\gamma_0$ any strictly normal geodesic with no non-trivial abnormal subsegments in $N$ (for example $\gamma_0=\gamma|_{[0,1]}$).
	
	If we define the path $\tilde{\gamma} : [0,1] \to M$ by:
	\begin{equation}
	\tilde{\gamma}(t)=(\underbrace{\gamma_0(t),\dots,\gamma_0(t)}_{a \text{ times}},\gamma(t-t_1),\gamma(t-t_2),\dots,\gamma(t-t_k),\underbrace{0_{\R^3},\dots,0_{\R^3}}_{b \text{ times}}),
	\end{equation}
then the corank function of $\tilde{\gamma}$ is exactly $f$.

	As a remark, we also have a full description of the normal geodesics branching from $\tilde{\gamma}$, given, for $\alpha_1,\dots,\alpha_k \in \R$, by:
	$$\tilde{\gamma}_{(\alpha_1,\dots,\alpha_k)}(t)=(\gamma_0(t),\dots,\gamma_0(t),\gamma_{\alpha_1}(t-t_1),\gamma_{\alpha_2}(t-t_2),\dots,\gamma_{\alpha_k}(t-t_k),0_{\R^3},\dots,0_{\R^3}),$$
	where $\gamma_{\alpha}$ are the already defined branching geodesics in $N$, from Section \ref{s:example1}.
\end{proof}

\subsection*{Acknowledgments}
This work was supported by the Grants ANR-15-CE40-0018 and ANR-15-IDEX-02. While this note was in preparation, N. Juillet shared with us an example similar to the one of Section \ref{s:example1}, in a rank-varying structure on $\mathbb{R}^4$.  We thank the anonymous referee for carefully reading the paper, and suggesting the simple description of branching in terms of magnetic field presented in Section \ref{s:magnetic}.

\bibliography{bibliographie}

\end{document}